\newcommand*{\Theorem}{Theorem}
\newcommand*{\Proposition}{Proposition}
\newcommand*{\Lemma}{Lemma}
\newcommand*{\Corollary}{Corollary}
\newcommand*{\Definition}{Definition}
\newcommand*{\Remark}{Remark}
\newcommand*{\Notation}{Notation}
\theoremstyle{plain}
\newtheorem{theorem}{\Theorem}
\newtheorem{corollary}[theorem]{\Corollary}
\newtheorem{lemma}[theorem]{\Lemma}
\theoremstyle{definition}
\newtheorem{definition}[theorem]{\Definition}
\theoremstyle{remark}
\newtheorem{remark}[theorem]{\Remark}
\renewcommand{\phi}{\varphi}
\renewcommand{\epsilon}{\varepsilon}
\newcommand{\lpfont}{\mathsf}
\newcommand*{\lp}[2][]{\ensuremath{
    \lpfont{
      \ifthenelse{\equal{#1}{}}{#2}{#1 \text{-} #2}}}\xspace}
\newcommand*{\lpp}[3][]{\ensuremath{
    \lpfont{
      \ifthenelse{\equal{#1}{}}{#2}{#1 \text{-} #2}}(#3)}\xspace}
\newcommand*{\lph}[3][]{\ensuremath{
    \lpfont{
      \ifthenelse{\equal{#1}{}}{{#2}^{#3}}{#1 \text{-} {#2}^{#3}}}}\xspace
}
\newcommand*{\Weak}{{\upharpoonright}}
\newcommand*{\lf}[1]{\lpfont{#1}}
\newcommand*{\ls}[2][]{\ensuremath{
  \lpfont{
      \ifthenelse{\equal{#1}{}}{#2}{#1 \text{-} #2}}}\xspace}
\newcommand*{\WIDEomega}[1]{\vphantom{\lpfont{\widehat{#1}}}\smash[t]{\lpfont{\widehat{#1}}}^\omega}
\newcommand{\IMPL}[1][]{
  \ifthenelse{\equal{#1}{}}{\mathop{\rightarrow}}{\mathop{\stackrel{#1}{\longrightarrow}}}}
\newcommand*{\OR}{\mathrel{\lor}}
\newcommand*{\NOT}{\neg}
\newcommand*{\IFF}[1][]{
  \ifthenelse{\equal{#1}{}}{\mathrel{\leftrightarrow}}{\mathrel{\stackrel{#1}{\longleftrightarrow}}}}
\newcommand*{\Quantor}[2]{{#1 #2}\,}
\newcommand*{\Forall}[1]{\Quantor{\forall}{#1}}
\newcommand*{\Exists}[1]{\Quantor{\exists}{#1}}
\newcommand*{\Nat}{\ensuremath{\mathbb{N}}}
\newcommand*{\Real}{\ensuremath{\mathbb{R}}}
\newcommand*{\Rat}{\ensuremath{\mathbb{Q}}}
\title{On the strength of weak compactness}
\author{Alexander P.\ Kreuzer}
\address{Fachbereich Mathematik, Technische Universit\"{a}t Darmstadt\\
Schlossgartenstra{\ss}e~7, 64289 Darmstadt, Germany
}
\email{akreuzer@mathematik.tu-darmstadt.de}
\thanks{The author is supported by the German Science Foundation (DFG Project KO 1737/5-1).}
\thanks{I am grateful to Ulrich Kohlenbach for useful
discussions and suggestions for improving the presentation
of the material in this article.}
\subjclass[2010]{Primary 03F60; Secondary 03D80, 03B30}
\keywords{Bolzano-Weierstra{\ss} principle, weak sequential compactness, Turing degree, abstract Hilbert space}
\begin{document}
\begin{abstract}
  We study the logical and computational strength of weak compactness in the separable Hilbert space~$\ell_2$.

  Let \lp[weak]{BW} be the statement the every bounded sequence in $\ell_2$ has a weak cluster point.
  It is known that \lp[weak]{BW} is equivalent to \ls{ACA_0} over \ls{RCA_0} and thus that it is equivalent to (nested uses of) the usual Bolzano-Weierstra{\ss} principle \lp{BW}.

  We show that \lp[weak]{BW} is instance-wise equivalent to the \lp[\Pi^0_2]{CA}. This means that for each $\Pi^0_2$ sentence $\lf{A}(n)$ there is a sequence $(x_i)_{i\in\Nat}$ in $\ell_2$, such that one can define the comprehension functions for $\lf{A}(n)$ recursively in a cluster point of $(x_i)_i$.
  As consequence we obtain that the degrees $d\ge_T 0''$  are exactly those degrees that contain a weak cluster point of any computable, bounded sequence in $\ell_2$.
  Since a cluster point of any sequence in the unit interval $[0,1]$ can be computed in a degree low over $0'$ (see \cite{aK}), this show also that instances of \lp[weak]{BW} are strictly stronger than instances of \lp{BW}.

  We also comment on the strength of \lp[weak]{BW} in the context of abstract Hilbert spaces in the sense of Kohlenbach and show that his construction of a solution for the functional interpretation of weak compactness is optimal, cf.~\cite{uKgf}.  
\end{abstract}

\maketitle

We investigate the computational and logical strength of weak sequential compactness in the separable Hilbert space~$\ell_2$.

The strength of weak compactness has so far only been studied in the context of proof mining, see \cite{uK10,uKgf}. There general Hilbert spaces in a more general logical system are considered.  It is straightforward to deduce from this analysis that weak compactness for $\ell_2$ is equivalent to \ls{ACA_0} over \ls{RCA_0}.

In this paper we refine this result and show that  weak compactness on $\ell_2$ is instance-wise equivalent to \lp[\Pi^0_2]{CA} over \ls{RCA_0}.
This means that for each bounded sequence in $\ell_2$ one can uniformly compute a function $f$ such that from a comprehension function for $\Forall{x}\Exists{y} f(x,y,n)=0$ one can compute a weak cluster point and vice versa.

 As consequence we obtain that the degrees $d \ge_T 0''$ are exactly those degrees that compute a weak cluster point for each computable bounded sequence in $\ell_2$ and that there is a computable bounded sequence in $\ell_2$ such that from a cluster point of this sequence one can compute $0''$.

This shows that instances of the Bolzano-Weierstra{\ss} principle for weak compactness are strictly stronger than instances of the usual Bolzano-Weierstra{\ss} principle.

This paper is organized as follows: first the Hilbert space~$\ell_2$ is defined. This definition follows \cite{sS99,AS06}. Then the actual results are proven (Theorems~\ref{thm:weakbw} and \ref{thm:rev}) and we show that the result can also be formulated for abstract Hilbert spaces, in the sense of Kohlenbach \cite{uK08} (\prettyref{thm:weakbwabs}). 
As corollary of this we obtain that Kohlenbach's analysis of the weak compactness functional $\Omega^*$ in \cite{uKgf} is optimal (\prettyref{cor:omega}).
At the end,  we reformulate the result of the analysis in terms of the Weihrauch lattice (\prettyref{rem:weihrauch}).

\medskip

\begin{definition}[vector space, {\cite[II.10]{sS99}}]
  A \emph{countable vector space $A$ over a countable field $K$} consists of a set $\lvert A\rvert \subseteq \Nat$
  with operations $+\colon \lvert A\rvert \times \lvert A\rvert \to \lvert A \rvert$ and
  $\cdot \colon \lvert K \rvert \times \lvert A\rvert \to \lvert A \rvert$ and
  a distinguished element $0\in \lvert A \rvert $
  such that $(\lvert A \rvert, +,\cdot, 0)$ satisfies the usual axioms for a vector space over $K$.
\end{definition}

\begin{definition}[Hilbert space, {\cite[Definition~9.3]{AS06}}]\label{def:hs}
  A \emph{(real) separable Hilbert space $H$} consists of a countable vector space $A$ over $\Rat$ together 
  with a function $\langle \cdot, \cdot \rangle \colon A\times A \to \Real$ satisfying
  \begin{enumerate}
  \item $\langle x, x \rangle \ge 0$,
  \item $\langle x, y \rangle = \langle y, x \rangle$,
  \item $\langle ax + by, z \rangle = a\langle x, z \rangle + b \langle y, z \rangle$,
  \end{enumerate}
  for all $x,y,z\in A$ and $a,b\in \Rat$.
\end{definition}
The inner product on $H$ induces a pseudonorm $\lVert x \rVert := \sqrt{\langle x,x\rangle}$.
We think of the Hilbert space $H$ as the completion of $A$ under the pseudometric $d(x,y)=\lVert x - y \rVert$. Thus an element of $H$ consists of a sequence $(x_n)_{n\in\Nat}\subseteq A$, such that $d(x_n,x_m)< 2^{-n}$ for all $m>n$.
The inner product $\langle \cdot , \cdot \rangle$ is continuously extended to the whole space~$H$.

A Hilbert space is finite dimensional if it is spanned by finitely many vectors. If this is not the case we say that it is infinite dimensional.

Avigad, Simic showed in \cite[Theorem~10.9]{AS06} that every Hilbert space~$H$ in the sense of Definition~\ref{def:hs} has a orthonormal basis. Since each such Hilbert space is separable this basis is at most countable.

As consequence of this each two infinite dimensional (separable) Hilbert spaces are isomorphic over \ls{RCA_0}, see \cite[Corollary~10.11]{AS06}. Thus we many restrict our attention to $\ell_2$, as given by the following definition.

\begin{definition}[$\ell_2$, {\cite[II.10.2]{sS99}}]
  Let $A=(\lvert A \rvert, +, \cdot, 0)$ be a vector space over $\Rat$, where
  $\lvert A\rvert$ is the set of all finite sequences of rational numbers $\langle r_0,\dots,r_m\rangle$, such that either $m=0$ or $r_m\neq 0$. Addition is defined by putting $\langle r_0,\dots,r_m\rangle + \langle s_0,\dots,s_n\rangle = \langle r_0+s_0,\dots,r_k+s_k\rangle$ where $r_i,s_i=0$ for $i>m,n$ and $k=\max\{i\mid i=0 \OR r_i+s_i\neq 0\}$. For scalar multiplication put $q\cdot \langle r_0,\dots,r_m\rangle =\langle 0\rangle$ if $q=0$ and $\langle q\cdot r_0,\dots,q\cdot r_m\rangle$ otherwise.

  The space $\ell_2$ is defined to be the Hilbert space consisting of $A$ with the inner product
  \[
  \big\langle \langle r_0,\dots,r_m\rangle, \langle s_0,\dots,s_n \rangle\big\rangle = \sum_{i=0}^{\max(n,m)} r_i s_i
  .\]
\end{definition}

The canonical orthonormal basis $(e_n)_n$ of $\ell_2$ is given by 
\[
e_n = \langle \underbrace{0,\dots,0}_{\text{$n$ times}} , 1\rangle.
\]

\begin{definition}[projection]
  Let $M$ be a closed linear subspace of a Hilbert space $H$. A point $y\in M$ is called \emph{projection} of $x\in H$ if $x-y$ is orthogonal to (each element of) $M$.

  A bounded linear operator $P_M$ on $H$ that maps each point of $H$ to its projection on $M$ is called \emph{projection function for $M$}.
\end{definition}
Usually projections are defined differently, see e.g.\ \cite[Definition~12.1]{AS06}. Avigad, Simic showed that this definition is over \ls{RCA_0} equivalent to the usual definition, see \cite[Lemma~12.2]{AS06}.

We immediately obtain the following lemma:
\begin{lemma}\label{lem:proj}
  Let $N\subset \Nat$ and $M$ be the subspace of $\ell_2$ that is spanned by $\{e_n \mid n\in N\}$.
  Then \ls{RCA_0} proves that the projection $P_M$ of $\ell_2$ onto the space $M$ exists.
\end{lemma}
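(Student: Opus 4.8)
The plan is to construct the projection explicitly by coordinate-restriction, exploiting that $(e_n)_n$ is an orthonormal basis and that membership in $N$ is decidable, since $N$ is given as a set. First I would define, for each $a=\langle r_0,\dots,r_m\rangle\in A$, the vector $\pi_N(a)\in A$ obtained by replacing each coordinate $r_i$ with $0$ whenever $i\notin N$ and then normalizing to the canonical form required for elements of $A$. As $N$ is a set, this is a primitive-recursive operation on the codes and hence available in \ls{RCA_0}. One checks at once that $\pi_N$ is $\Rat$-linear and that $\lVert \pi_N(a)\rVert^2=\sum_{i\in N} r_i^2 \le \lVert a\rVert^2$, so that $\lVert \pi_N(a)-\pi_N(b)\rVert=\lVert \pi_N(a-b)\rVert \le \lVert a-b\rVert$.

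Next, using this contraction property, I would extend $\pi_N$ to all of $\ell_2$. If $x\in\ell_2$ is coded by a sequence $(x_k)_{k\in\Nat}\subseteq A$ with $d(x_k,x_j)<2^{-k}$ for $j>k$, then $(\pi_N(x_k))_{k\in\Nat}$ satisfies the same Cauchy condition and therefore codes an element of $\ell_2$; I set $P_M(x)$ to be this element. This definition is $\Delta^0_1$ in $x$ and $N$, so $P_M$ exists in \ls{RCA_0}, and linearity together with the bound $\lVert P_M\rVert\le 1$ transfer from $A$ to $\ell_2$ by continuity. Since each approximant $\pi_N(x_k)$ is a finite $\Rat$-linear combination of basis vectors $e_n$ with $n\in N$, it lies in $M$, and hence so does its limit $P_M(x)$.

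It remains to verify the orthogonality condition $x-P_M(x)\perp M$. For $a\in A$ and $n\in N$ the $n$-th coordinate of $a-\pi_N(a)$ is $0$, so $\langle a-\pi_N(a), e_n\rangle=0$; passing to the limit and using the continuous extension of $\langle\cdot,\cdot\rangle$ to $H$ gives $\langle x-P_M(x), e_n\rangle=0$ for every $n\in N$. By $\Rat$-linearity this yields orthogonality to every finite rational combination of the $e_n$ with $n\in N$, and by continuity to their closure $M$. Thus $P_M(x)$ is the projection of $x$ onto $M$, and $P_M$ is a projection function for $M$ in the sense of the definition above.

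The argument is essentially bookkeeping; the only point that needs care is confirming that each step stays within \ls{RCA_0} --- namely that $n\in N$ is decidable, that the sequence $(\pi_N(x_k))_k$ is $\Delta^0_1$-definable from $x$ and $N$, and that linearity and orthogonality transfer from the dense subspace $A$ to all of $H$ through the continuous extension of the inner product. Since no unbounded search or comprehension beyond $\Delta^0_1$ is invoked, the lemma indeed holds over \ls{RCA_0}, which is what the \emph{immediately obtain} phrasing anticipates.
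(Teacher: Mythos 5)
Your proposal is correct and follows essentially the same route as the paper's proof: define the coordinate restriction on the dense countable vector space $A$ (using decidability of membership in $N$), observe it is linear and nonexpansive, and extend continuously to all of $\ell_2$ via the Cauchy representation. You additionally spell out the verification that $P_M(x)\in M$ and the orthogonality condition $x-P_M(x)\perp M$, which the paper leaves implicit under ``it is easy to show''; this is sound and stays within \ls{RCA_0} as required.
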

\begin{proof}
  The projection of an element $\langle r_0,\dots, r_m\rangle$ of the space $\lvert A \rvert$ is given by
  $\langle r_0',\dots, r_{m'}'\rangle$, where $r'_i = r_i$ if $n\in N$ and $r'_i = 0$ if $n\notin N$ and $m' = \max\{ i \le m \mid r_i \neq 0 \OR i=0 \}$.

  It is easy to show that $P_M$ is linear and that is bounded by $1$ (at least on $\lvert A \rvert$). From this one can deduce that $P_M$ is continuous and continuously extend it to the full space $\ell_2$.
\end{proof}

\begin{definition}[weak convergence]
  We say that a sequence $(x_i)_{i\in\Nat}$ of elements of a Hilbert space $H$ \emph{converges weakly} to a point $x$ if 
  \begin{equation}\label{eq:weak}
    \Forall{y\in H} \lim_{i\to\infty} \langle y, x_i \rangle = \langle y, x \rangle
    .
  \end{equation}

  The \emph{Bolzano-Weierstra{\ss} principle for weak convergence} is defined to be the statement that for every bounded sequence $(x_i)_{i\in\Nat}$ of elements of $H$ there exists a point $x$ such that a subsequence of $(x_i)_i$ converges weakly to $x$.
  This principle is abbreviated by \lp[weak]{BW}.
  The restriction of this principle to a fixed sequence $(x_i)_{i\in\Nat}$ is denoted by \lpp[weak]{BW}{(x_i)_i}.

  If $H$ has an orthonormal basis it is sufficient to have \eqref{eq:weak} only for all $y$ in the basis. 
\end{definition}

\begin{lemma}\label{lem:pweak}
  Projections are weakly continuous in the sense that if $x$ is the weak limit point of a sequence $(x_i)_{i\in \Nat}$, then $Px$ is the weak limit point of $(Px_i)_{i\in\Nat}$ for any projection $P$.
\end{lemma}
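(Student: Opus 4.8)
The plan is to reduce weak continuity of $P$ to the self-adjointness (symmetry) of a projection, namely the identity
\[
\langle Px, y \rangle = \langle x, Py \rangle \quad \text{for all } x,y \in H,
\]
and then to feed the test vector $Py$ into the definition of weak convergence.

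First I would establish this symmetry identity. Writing $P = P_M$ for the projection onto a closed subspace $M$, the defining property of a projection gives the orthogonal decomposition $x = Px + (x - Px)$ with $Px \in M$ and $x - Px$ orthogonal to $M$, and likewise $y = Py + (y - Py)$. Since $Px, Py \in M$ while $x - Px$ and $y - Py$ are orthogonal to $M$, expanding both inner products using bilinearity and symmetry (\prettyref{def:hs}(2) and (3)) shows that $\langle Px, y\rangle = \langle Px, Py\rangle = \langle x, Py\rangle$. This is a purely algebraic computation on the dense subspace $A$; it extends to all of $H$ by continuity of $\langle\cdot,\cdot\rangle$ and of $P$, and is available already in \ls{RCA_0}.

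With the identity in hand the conclusion is immediate. Let $(x_i)_i$ converge weakly to $x$. For every $y \in H$ the vector $Py$ is again an element of $H$, so applying \eqref{eq:weak} to the test vector $Py$ and using symmetry twice yields
\[
\lim_{i\to\infty}\langle y, Px_i\rangle = \lim_{i\to\infty}\langle Py, x_i\rangle = \langle Py, x\rangle = \langle y, Px\rangle,
\]
which is exactly weak convergence of $(Px_i)_i$ to $Px$. If $x$ is only a weak cluster point of $(x_i)_i$, one runs the same argument along the weakly convergent subsequence witnessing $x$, obtaining that $Px$ is a weak cluster point of $(Px_i)_i$.

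I do not expect a genuine obstacle: the whole argument rests on the symmetry identity, and once that is in place the result follows by a single substitution. The only point to keep in mind is that we invoke \eqref{eq:weak} for the particular test vector $Py$; since the definition of weak convergence quantifies over all $y \in H$ this is legitimate, and if one prefers to verify convergence only against the basis vectors $e_n$, then the boundedness of $(x_i)_i$ makes the functionals $z \mapsto \langle z, x_i\rangle$ uniformly bounded, so convergence on the dense span of the basis upgrades to convergence against $Py$, keeping the argument within \ls{RCA_0}.
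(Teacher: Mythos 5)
Your proof is correct and is essentially the argument the paper has in mind: the paper disposes of this lemma in one line (``follows from the definition of the projection and the continuity of $\langle\cdot,\cdot\rangle$''), and your derivation of the self-adjointness identity $\langle Px,y\rangle=\langle x,Py\rangle$ from the orthogonality clause in the definition of projection, followed by substituting $Py$ as the test vector in \eqref{eq:weak}, is exactly the fleshed-out version of that one-liner. The extra care you take (handling cluster points via the witnessing subsequence, and the $\epsilon/3$ upgrade when weak convergence is only tested against basis vectors) is sound and stays within \ls{RCA_0}.
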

\begin{proof}
  Follows from the definition of the projection and the continuity of $\langle \cdot,\cdot \rangle$.
\end{proof}

An instance of $\Pi^0_2$-comprehension given by a $\Pi^0_2$ formula $\lf{A}(n)$ is the statement 
\begin{align*}
  \Exists{g} \Forall{n} & \left(g(n)=0 \IFF \lf{A}(n)\right).
  \intertext{Since all $\Pi^0_2$-formulas $\lf{A}(n)$ can be written as $\Forall{x}\Exists{y} txyn = 0$ for a (primitive recursive) term $t$, we can rewrite this as}
  \Exists{g} \Forall{n} & \left(g(n)=0 \IFF \Forall{x}\Exists{y} txyn = 0 \right).
\end{align*}
This statement will be abbreviated by \lpp[\Pi^0_2]{CA}{t}. (In order to be able to formulate this in \ls{RCA_0} we will use the conservative extension by all primitive recursive functions of it.)

\begin{theorem}\label{thm:weakbw}
  For each instance $\lf{A}(n)\equiv [\Forall{x}\Exists{y} txyn=0]$ of \lp[\Pi^0_2]{CA} there exists a bounded sequence $(x_i)_{i\in\Nat}$ in $\ell_2$, such that
  \[
  \ls{RCA_0} \vdash \lpp[weak]{BW}{(x_i)_{i\in\Nat}} \IMPL \lpp[\Pi^0_2]{CA}{t}
  .
  \]

  Moreover, the sequence $(x_i)_{i\in\Nat}$ can be primitive recursively and uniformly computed from $t$, i.e.\ there is a primitive recursive functional $F$ such that $x_i=F(t,i)$.
\end{theorem}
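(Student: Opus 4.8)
The plan is to encode the double quantifier in $\lf{A}(n)\equiv[\Forall{x}\Exists{y}\,txyn=0]$ into the question of whether a ``search pointer'' for $n$ escapes to infinity or stalls at a finite value, and then to recover this dichotomy from the block-norms of a weak cluster point. Fix a primitive recursive pairing $\langle\cdot,\cdot\rangle\colon\Nat\times\Nat\to\Nat$ and, for each $n$ and each stage $i$, let $c(n,i)$ be the largest $c$ such that $\Exists{y\le i}\,txyn=0$ holds for every $x<c$; this is primitive recursive in $t$ and nondecreasing in $i$. I would then set
\[
  x_i \;=\; \sum_{n\le i} 2^{-n}\, e_{\langle n,\,c(n,i)\rangle},
\]
a finitely supported element of $A\subseteq\ell_2$ with $\lVert x_i\rVert^2=\sum_{n\le i}4^{-n}<2$, so that $(x_i)_i$ is bounded. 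Since $c$ is primitive recursive, this display already exhibits the required primitive recursive functional $F$ with $x_i=F(t,i)$, settling the ``moreover'' clause.

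The heart of the argument is the following dichotomy on the block $V_n$ spanned by $\{e_{\langle n,c\rangle}\mid c\in\Nat\}$. If $\lf{A}(n)$ holds, then for every fixed $c$ one has $c(n,i)>c$ for all large $i$, so each coordinate $\langle e_{\langle n,c\rangle},x_i\rangle$ is eventually $0$. If $\lf{A}(n)$ fails, fix any $x_0$ with $\Forall{y}\,tx_0yn\neq 0$; then $c(n,i)\le x_0$ for all $i$, so $c(n,i)$ is bounded and nondecreasing, hence eventually constant at some $c_0$, whence $\langle e_{\langle n,c_0\rangle},x_i\rangle=2^{-n}$ for all large $i$ while every other block coordinate vanishes eventually. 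Now let $x$ be a weak cluster point supplied by $\lpp[weak]{BW}{(x_i)_i}$, witnessed by a subsequence $(x_{i_j})_j$. Testing \eqref{eq:weak} against the basis vectors $y=e_{\langle n,c\rangle}$ gives $\langle e_{\langle n,c\rangle},x\rangle=\lim_j\langle e_{\langle n,c\rangle},x_{i_j}\rangle$, so the dichotomy transfers to $x$: using the projection $P_{V_n}$, which exists by \prettyref{lem:proj}, one has $\lVert P_{V_n}x\rVert=0$ when $\lf{A}(n)$ holds and $\lVert P_{V_n}x\rVert=2^{-n}$ when $\lf{A}(n)$ fails.

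It then remains to extract the comprehension function. As $\lVert P_{V_n}x\rVert$ is a real computable from $x$ taking only the two values $0$ and $2^{-n}$, which are separated by the gap $2^{-n}$, I would compute a rational $q$ with $\lvert\lVert P_{V_n}x\rVert-q\rvert<2^{-n-2}$ and set $g(n)=0$ if $q<2^{-n-1}$ and $g(n)=1$ otherwise. This $g$ is $\Delta^0_1$ in $x$, hence exists in \ls{RCA_0}, and by the dichotomy $g(n)=0\IFF\lf{A}(n)$, which is precisely $\lpp[\Pi^0_2]{CA}{t}$. The point demanding the most care is the formalization in \ls{RCA_0}: one must verify both halves of the dichotomy there (the ``holds'' case needs only that finitely many witnesses appear by some stage, the ``fails'' case needs that a bounded nondecreasing sequence of naturals is eventually constant), and one must note that the decidability of the threshold test rests entirely on the $2^{-n}$ separation of the two possible norm values, not on any ability to decide $\lf{A}(n)$ directly.
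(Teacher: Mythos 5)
Your proposal is correct and follows essentially the same route as the paper's own proof: the same search-pointer function (the paper's $f(n,i)$), the same parallelized sequence $x_i=\sum_{n\le i}2^{-n}e_{\langle n,c(n,i)\rangle}$ (the paper uses weights $2^{-(n+1)/2}$), the same block projections via \prettyref{lem:proj}, and the same gap-based threshold test to extract the comprehension function. The only cosmetic difference is that you transfer the dichotomy to the cluster point by testing weak convergence against basis vectors directly, where the paper invokes weak continuity of projections (\prettyref{lem:pweak}); both rest on the same underlying facts and formalize equally well in \ls{RCA_0}.
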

\begin{proof}
  Define 
  \[
    f(n,i):= \max \{ x \le i \mid \Forall{x' < x} \Exists{y<i} (t(x',y,n)=0) \}
  .\]
  It is clear that $\lambda i . f(n,i)$ is increasing for each $n$.

  \noindent\textit{Claim 1.}
  \[
  \lf{A}(n) \quad \text{if{f}} \quad \text{$\lambda i. f(n,i)$ is unbounded, i.e.\  $\Forall{k}\Exists{i} (f(n,i)>k)$}
  .\]
  \noindent\textit{Proof of Claim 1.}
  \begin{itemize}
  \item The right to left direction follows immediately from the definition of $f$.
  \item For the left to right direction fix an $n$. We will show that not the right side implies not the left side.

    Hence assume that $\lambda i . f(n,i)$ is bounded by $k$, i.e.
    \begin{equation}\label{eq:bounded}
      \Forall{i} (f(n,i) \le k).
    \end{equation}
    By $\Sigma^0_1$-induction we may assume that $k$ is minimal and thus 
    \[
    \Exists{i} (f(n,i)=k)
    .\]
    From the definition of $f$ we obtain
    \[
    \Forall{x < k} \Exists{y} (t(x,y,n)=0)
    .\]
    Together with \eqref{eq:bounded} we obtain that
    \[
    \Forall{y} (t(k,y,n)\neq 0)
    \]
    and hence $\NOT \lf{A}(n)$.
  \end{itemize}
  This proofs the claim.

  Let 
  \[
  y_{n,i}:= e_{\langle n, f(n,i) \rangle}
  .\]
  The sequence $(y_{n,i})_{i\in\Nat}$ is obviously bounded by $1$ and hence possesses for each $n$ a weak cluster point $y_n$.

  \noindent\textit{Claim 2.}
  \begin{itemize}
  \item $\lVert y_n \rVert =_\Real 0$, if $\lf{A}(n)$ and 
  \item $\lVert y_n \rVert =_\Real 1$, if $\NOT \lf{A}(n)$.
  \end{itemize}
  
  \noindent\textit{Proof of Claim 2.}
  \begin{itemize}
  \item If $\lf{A}(n)$ is true, then $\lambda i . f(n,i)$ is unbounded and hence $\langle e_j, y_{n,i}\rangle$ eventually becomes $0$. Therefore $y_{n,i}$ converges weakly to $0$.
  \item If $\lf{A}(n)$ is false, then $\lambda i . f(n,i)$ is bounded. By $\Sigma^0_1$-induction we obtain a smallest upper bound $k$ and since $\lambda i . f(n,i)$ is increasing we obtain that
  $\lim_{i\to\infty} f(n,i) = k$.
  As consequence we obtain that $y_{n,i}$ eventually becomes constant $e_{\langle n,k\rangle}$ and hence that $y_n=e_{\langle n,k\rangle}$ and $\lVert y_n \rVert =_\Real 1$.
  \end{itemize}
  This proves the claim.

  We parallelize this process to obtain the comprehension function for $\lf{A}(n)$. For this let
  \[
  x_i := \sum_{n=0}^{i} 2^{-\frac{n+1}{2}}y_{n,i}
  .\]
  Since the $y_{n,i}$ are orthogonal for different $n$, we obtain by Pythagoras that
  \[
  {\lVert x_i \rVert}^2 = \sum_{n=0}^{i} 2^{-(n+1)}  {\lVert y_{n,i} \rVert}^2 \le 1
  \]
  and thus that $(x_i)$ is bounded.

  By \lpp[weak]{BW}{(x_i)_i} there exists a weak cluster point $x$ of $(x_i)$.
  Let now $M_n$ be the closed linear space spanned by $\{  e_{\langle n , k \rangle} \mid k\in\Nat \}$. 
  By definition the subspaces $M_n$ are disjoint (except for the $0$ vector) for different $n$, and $y_{n,i}\in M_n$ for all $i,n$.

  By \prettyref{lem:proj} the projections $P_{M_n}$ onto the spaces $M_n$ exist. For this projections we have
  \begin{align*}
    P_{M_n}(x_i) &= 2^{-\frac{n+1}{2}} y_{n,i} \quad \text{for $n \ge i$}
    .
    \intertext{Since $P_{M_n}$ is weakly continuous, see \prettyref{lem:pweak}, we get}
    P_{M_n} (x) &= 2^{-\frac{n+1}{2}} y_n
    .
  \end{align*}  
  Now Claim~2 yields that $\lVert P_{M_n}(x) \rVert =_\Real 0$ if $\lf{A}(n)$ and $\lVert P_{M_n}(x) \rVert =_\Real 2^{-\frac{n+1}{2}}$ if $\NOT\lf{A}(n)$.
  Hence the function
  \[
  g(n):=
  \begin{cases}
    0 & \text{if $\lVert P_{M_n}(x) \rVert(n+1) <_\Rat 2^{-\frac{n+1}{2}}$,} \\
    1 & \text{otherwise,}
  \end{cases}
  \]
  where $\lVert P_{M_n}(x) \rVert(n+1)$ is a $2^{-(n+1)}$ good rational approximation of $\lVert P_{M_n}(x) \rVert$, provides a comprehension function and solves the theorem.

  It is clear that $(x_i)$ is primitive recursive in $t$.
\end{proof}

As immediate consequence we obtain the following corollary:
\begin{corollary}
  There is a sequence $(x_i)_i$ of elements in $\ell_2$ such that from a cluster point $x$ of this sequence one can compute any element of the second Turing jump $0''$.
\end{corollary}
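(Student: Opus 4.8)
The plan is to apply \prettyref{thm:weakbw} to a $\Pi^0_2$ instance that encodes the complement of $0''$, and then to extract a genuine Turing reduction from the effectivity built into the theorem. First I would recall the standard computability-theoretic fact that $0''$ is $\Sigma^0_2$-complete, so that its complement $\overline{0''}$ is $\Pi^0_2$-complete. Writing the predicate $n\notin 0''$ in its $\Pi^0_2$ normal form $\Forall{x}\Exists{y} txyn=0$ for a primitive recursive term $t$, I obtain an instance $\lf{A}(n)\equiv[\Forall{x}\Exists{y} txyn=0]$ of \lp[\Pi^0_2]{CA} whose defining set $\{n\mid\lf{A}(n)\}$ is exactly $\overline{0''}$.

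Applying \prettyref{thm:weakbw} to this $t$ yields a bounded sequence $(x_i)_{i\in\Nat}$ in $\ell_2$. Since the theorem produces $(x_i)_i$ primitive recursively from $t$ and $t$ itself is primitive recursive, the sequence $(x_i)_i$ is computable. The proof of the theorem moreover constructs the comprehension function $g$ for $\lf{A}(n)$ from a weak cluster point $x$ by forming the rational approximations $\lVert P_{M_n}(x)\rVert(n+1)$ via the computable projections $P_{M_n}$; this construction is recursive in $x$, so $g\le_T x$.

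Finally I would observe that $g$ is by construction the characteristic function of $\{n\mid\lf{A}(n)\}=\overline{0''}$, whence $0''\equiv_T\overline{0''}\le_T g\le_T x$. Hence from any weak cluster point $x$ of the computable sequence $(x_i)_i$ one computes the full set $0''$, which is the assertion of the corollary. I expect no essential difficulty here: all the real work is done in \prettyref{thm:weakbw}, and the only points to verify are the choice of a $\Pi^0_2$-complete instance and the observation that the theorem's implication is witnessed by an $x$-recursive construction of $g$, which is exactly what turns the \ls{RCA_0}-provable implication into the asserted Turing reduction.
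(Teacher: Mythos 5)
Your proof is correct and takes essentially the same route as the paper: both apply \prettyref{thm:weakbw} to a $\Pi^0_2$-complete instance tied to $0''$ (the paper takes the predicate about $\{n\}^{0'}(n)$ halting, you take the complement $n\notin 0''$), and read off the reduction from the effectivity of the theorem's proof. If anything, your version is slightly more careful than the paper's one-line proof, since non-halting of $\{n\}^{0'}(n)$ is the genuinely $\Pi^0_2$ predicate and you make explicit that the Turing reduction $g\le_T x$ comes from the construction inside the proof of \prettyref{thm:weakbw}, not merely from its \ls{RCA_0}-provable statement.
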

\begin{proof}
  Take for $\lf{A}(n)$ in \prettyref{thm:weakbw} the $\Pi^0_2$ statement that the Turing machine $\{n\}^{0'}(n)$ halts.
\end{proof}

Kohlenbach studies weak compactness in the context of arbitrary abstract Hilbert spaces, see  \cite{uK08,uK10}. 
By abstract Hilbert space we mean that the Hilbert space is added as a new type to the system together with the Hilbert space axioms and that the space is not coded as sequences of numbers. 
With this one can analyze Hilbert spaces without referring to a concrete space like $\ell_2$ and one does not automatically obtain a separable Hilbert space but can analyze general Hilbert spaces.

We do not introduce the notation for abstract Hilbert spaces here but refer the reader to \cite[Chapter~17]{uK08}.
We show now that the statement of \prettyref{thm:weakbw} is also applicable in this context:
\begin{theorem}\label{thm:weakbwabs}
  Let $\ls{\WIDEomega{PA}\Weak\mathnormal{[X,\langle\cdot,\cdot\rangle]}}$ be the extension of $\ls{\WIDEomega{PA}\Weak}$ by the abstract Hilbert space $X$ with the scalar product $\langle \cdot,\cdot\rangle$ and let now $\lp[weak]{BW}$ denote the Bolzano-Weierstra{\ss} principle for weak compactness in $X$.
  Then
  \begin{multline*}
  \ls{\WIDEomega{PA}\Weak\mathnormal{[X,\langle\cdot,\cdot\rangle]}} + \lp[\Pi^0_1]{CP} \vdash 
  \left(
    \Exists{(e_i)_{i\in\Nat}} \Forall{i,j} \langle e_i, e_j \rangle = \delta_{ij} \right) \\
  \IMPL 
  \Exists{(x_i)_{i\in\Nat}} \left(\lpp[weak]{BW}{(x_i)_i} \IMPL \lpp[\Pi^0_2]{CA}{t}\right)
  .
  \end{multline*}
  In other words, if $X$ is provably infinite dimensional, then \prettyref{thm:weakbw} also holds with $\ell_2$ replaced by $X$.
\end{theorem}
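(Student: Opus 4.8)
The plan is to reproduce the construction and the argument of \prettyref{thm:weakbw} inside the abstract system, isolating the two ingredients that genuinely depended on the concrete space $\ell_2$. The purely arithmetical core — the primitive recursive definition of $f(n,i)$, the monotonicity of $\lambda i.\,f(n,i)$, and Claim~1 relating $\lf{A}(n)$ to the unboundedness of $\lambda i.\,f(n,i)$ — speaks only about natural numbers and transfers verbatim. Because the base theory $\ls{\WIDEomega{PA}\Weak}$ carries only restricted induction, the two places where the original proof used $\Sigma^0_1$-induction, namely in Claim~1 and in the extraction of the least bound $k$ in Claim~2, must be re-examined, and I expect them to be covered by the added principle \lp[\Pi^0_1]{CP}. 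The Hilbert-space-specific parts that must be redone are then (a) the supply of the orthonormal vectors, previously the canonical basis of $\ell_2$, and (b) the existence of the coordinate projections $P_{M_n}$, previously obtained from \prettyref{lem:proj}, which has no abstract counterpart.

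For (a) I would take the orthonormal sequence $(e_i)_{i\in\Nat}$ furnished by the antecedent $\Exists{(e_i)}\Forall{i,j}\langle e_i,e_j\rangle=\delta_{ij}$ and, after reindexing along a pairing function, set $y_{n,i}:=e_{\langle n,f(n,i)\rangle}$ and $x_i:=\sum_{n=0}^{i}2^{-\frac{n+1}{2}}y_{n,i}$. Each $x_i$ is a finite sum of vectors of $X$, hence a legitimate object of the abstract type, and is primitive recursively definable from $t$ and $(e_i)_i$; this provides the witness for the existential quantifier in the conclusion. The bound $\lVert x_i\rVert^2=\sum_{n=0}^{i}2^{-(n+1)}\le 1$ is just Pythagoras for finitely many orthonormal vectors and follows from the scalar-product axioms alone, so $(x_i)_i$ is bounded.

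The main obstacle is (b). In place of \prettyref{lem:proj} I would define the projection onto the closed span $M_n$ of $\{e_{\langle n,k\rangle}\mid k\in\Nat\}$ by the series $P_{M_n}(x):=\sum_{k}\langle e_{\langle n,k\rangle},x\rangle\,e_{\langle n,k\rangle}$. The delicate point, absent in the concrete case, is that this series must be shown to converge to an element of $X$ \emph{with a modulus}: by Bessel's inequality the partial sums of $\sum_{k}\lvert\langle e_{\langle n,k\rangle},x\rangle\rvert^2$ form a monotone sequence bounded by $\lVert x\rVert^2$, and it is exactly here that \lp[\Pi^0_1]{CP} is needed, to turn this bounded monotone sequence into a rate of convergence and thereby realise $P_{M_n}(x)$ as a genuine Cauchy sequence of type $X$. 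Granting this, $P_{M_n}$ is a bounded linear operator, and \prettyref{lem:pweak}, whose proof uses only continuity of $\langle\cdot,\cdot\rangle$ and is stated for an arbitrary projection, shows it is weakly continuous.

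With the projections in hand the argument closes exactly as in \prettyref{thm:weakbw}: from \lpp[weak]{BW}{(x_i)_i} one extracts a weak cluster point $x$; weak continuity gives $P_{M_n}(x)=2^{-\frac{n+1}{2}}y_n$ with $y_n$ a weak cluster point of $(y_{n,i})_i$; Claim~2 then shows that $\lVert P_{M_n}(x)\rVert$ equals $0$ when $\lf{A}(n)$ holds and $2^{-\frac{n+1}{2}}$ when it fails; and the same case distinction defining $g$ produces the comprehension function witnessing \lpp[\Pi^0_2]{CA}{t}. I expect the convergence-with-modulus argument for the projections, together with the re-derivation of the two $\Sigma^0_1$-induction steps over the weak base theory, to be the only genuinely new work, and to be precisely what accounts for the extra hypothesis \lp[\Pi^0_1]{CP}.
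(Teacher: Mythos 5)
There is a genuine gap, and it sits exactly at the point you yourself identify as the main obstacle, namely step (b). You propose to realise $P_{M_n}(x)=\sum_k\langle e_{\langle n,k\rangle},x\rangle e_{\langle n,k\rangle}$ by noting that the Bessel partial sums $\sum_{k\le K}{\lvert\langle e_{\langle n,k\rangle},x\rangle\rvert}^2$ are monotone and bounded by ${\lVert x\rVert}^2$, and then invoking \lp[\Pi^0_1]{CP} ``to turn this bounded monotone sequence into a rate of convergence''. No collection principle can do this: \lp[\Pi^0_1]{CP} is a first-order bounding schema that uniformizes \emph{finitely many} existential witnesses, whereas producing a limit (let alone a modulus) for a bounded monotone sequence is the monotone convergence principle, which over these weak base theories is equivalent to arithmetical comprehension and, instance-wise, to the Turing jump --- calibrating precisely this kind of strength is what the paper is about. (Concretely: \lp[\Pi^0_1]{CP} holds in the model of computable sets/functionals, where bounded monotone computable sequences with no computable rate abound.) Worse, for the particular cluster point $x$ at hand, a modulus for your series would by itself decide $\lf{A}(n)$: the series has at most one non-zero term, of size $2^{-(n+1)}$, and that term is present if{f} $\NOT\lf{A}(n)$. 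So this step is not a routine analytic lemma; it carries the entire logical content of the theorem, and you have given no proof of it in the base system.

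The paper's proof circumvents exactly this difficulty by a different route. Instead of constructing the operators $P_{M_n}$ on $X$, it shows that the specific point $x$ can be approximated by finite linear combinations of the $e_i$ with an explicit rate. Starting from ${\lVert x\rVert}^2=\lim_i\langle x,x_i\rangle$, one looks at the finitely many scalars $\langle x,y_{n,i}\rangle$ for $n\le k$; by the structure of $(y_{n,i})_i$ each of these is \emph{eventually constant} in $i$ (with value $0$ or $2^{-(n+1)}$), and \lp[\Pi^0_1]{CP} is applied only to bound the finitely many stabilization indices for $n\le k$ by a single $m$ --- a legitimate use of collection over the bounded quantifier $n\le k$, not over an infinite series. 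This yields ${\lVert x\rVert}^2\le\sum_{n=0}^k{\langle x,y_{n,i}\rangle}^2+2^{-k}$, hence $\sum_{n=0}^k\langle x,y_{n,i}\rangle y_{n,i}$ is a $2^{-k/2}$-approximation to $x$, and \lp[QF]{AC} converts these approximations into a Cauchy representation of $x$ over the $e_i$'s; only with this representation in hand is the projection obtained, as in \prettyref{lem:proj}. Your treatment of (a), of the arithmetical core, and of the endgame via \prettyref{lem:pweak} and Claim~2 matches the paper, but the justification of the projection of $x$ in the abstract setting --- the one step the paper itself singles out as not formalizing --- is missing, and the tool you propose for it cannot supply it.
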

\begin{proof}
  The only step in the proof of \prettyref{thm:weakbw} that does not formalize in \ls{\WIDEomega{PA}\Weak\mathrm{[X,\langle\cdot,\cdot\rangle]}} is projection of $x$ onto $M_n$, i.e.\ \prettyref{lem:proj}, since this depends on the coding of $\ell_2$.

  We show now how to obtain this projection of $x$ in this system. For this consider
  \begin{align*}
    {\lVert x \rVert}^2 & = \langle x , x \rangle = \lim_{i\to\infty} \langle x ,x_i \rangle \\
    & = \lim_{i\to\infty} \sum_{n=0}^i 2^{-(n+1)} \langle x, y_{n,i} \rangle \\
    & \le  \lim_{i\to\infty} \sum_{n=0}^k 2^{-(n+1)} \langle x, y_{n,i} \rangle + 2^{-k} \qquad\text{for each $k$} \\
    & = \sum_{n=0}^k   2^{-(n+1)} \lim_{i\to\infty} \langle x, y_{n,i} \rangle + 2^{-k} 
    .
  \end{align*}
  Now 
  \begin{equation}\label{eq:xyscal} 
  \langle x, y_{n,i} \rangle = \lim_{j\to\infty} \langle x_j,y_{n,i}\rangle = 2^{-(n+1)} \lim_{j\to\infty} \langle y_{n,j},y_{n,i}\rangle
  .\end{equation}
  Thus, by the definition of $y_{n,i}$ the term $\langle x, y_{n,i} \rangle$ is monotone in $i$ and in particular for each $n$ there is an $m$, such that
  \[
  \lim_{i\to\infty} \langle x, y_{n,i}\rangle= \langle x, y_{n,i'}\rangle \quad\text{for $i'\ge m$}
  .\]
  By \lp[\Pi^0_1]{CP} there is now an $m$ which does it for all $n\le k$.
  Hence, we obtain
  \begin{align*}
  \Forall{k}\Exists{i}\, {\lVert x \rVert}^2  & \le \sum_{n=0}^k   2^{-(n+1)}  \langle x, y_{n,i} \rangle + 2^{-k}.
  \intertext{By \eqref{eq:xyscal} the term  $\langle x, y_{n,i} \rangle$ is either $0$ or $2^{-(n+1)}$, hence}
  & = \sum_{n=0}^k   {\langle x, y_{n,i} \rangle}^2 + 2^{-k}.
  \end{align*}
  Thus, $\sum_{n=0}^k \langle x, y_{n,i} \rangle y_{n,i}$ is a $2^{-k/2}$ good approximation of $x$ consisting of finite linear combinations of $(e_i)$. Using an application of \lp[QF]{AC} one easily obtains a sequence of approximations converging to $x$ at the rate $2^{-k}$. Using this one can obtain $P_{M_n}(x)$ like in \prettyref{lem:proj}.

 This proves the theorem.
\end{proof}
By applying the functional interpretation to this we obtain the following corollary:
\begin{corollary}\label{cor:omega}
  Let $\Omega$ be a solution of the functional interpretation of \lp[weak]{BW} then for every $n \ge 1$ there  are terms in $T_n$, such that the application of $\Omega$ to these terms is (extensionally) equal to a function definable in the $T_{n+2}$ but not in $T_{n+1}$.
\end{corollary}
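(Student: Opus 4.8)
The plan is to establish the matching lower bound ``$\notin T_{n+1}$'' and to read off the upper bound ``$\in T_{n+2}$'' from Kohlenbach's explicit solution $\Omega^*$ in \cite{uKgf}, transporting both statements across the reduction of \prettyref{thm:weakbwabs} by the soundness theorem for the functional interpretation. Since $\Omega^*$ is shown in \cite{uKgf} to send $T_n$-terms into $T_{n+2}$, the membership in $T_{n+2}$ is immediate and the whole content of the corollary lies in exhibiting, for each $n\ge 1$, terms in $T_n$ on which $\Omega$ cannot be extensionally equal to any $T_{n+1}$-definable function.

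First I would recall the calibration of the fragments $T_n$ against the functional interpretation of a single instance \lpp[\Pi^0_2]{CA}{t}. Its interpretation asks for a $g$ together with the metastable Skolem data certifying $g(m)=0$ iff $\Forall{x}\Exists{y}\,t(x,y,m)=0$; producing this uniformly in $m$ costs two recursion levels over the complexity of $t$, one for Skolemizing the inner $\Exists{y}$ and passing to the least witness, a second for deciding $g(m)$ uniformly in the outer $\Forall{x}$. Hence, for $t\in T_n$, a solution of the functional interpretation of \lpp[\Pi^0_2]{CA}{t} is definable in $T_{n+2}$, and as $t$ ranges over $T_n$ these solutions exhaust $T_{n+2}$. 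Here I would invoke the strict hierarchy theorem for the number-theoretic functions definable in the fragments $T_n$, together with the standard functional-interpretation calibration of the arithmetical induction and collection schemes, both available from \cite{uK08}.

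Next I would apply the soundness theorem to the proof of \prettyref{thm:weakbwabs}, which is carried out in $\ls{\WIDEomega{PA}\Weak\mathnormal{[X,\langle\cdot,\cdot\rangle]}}+\lp[\Pi^0_1]{CP}$ and derives $\lpp[weak]{BW}{(x_i)_i}\IMPL\lpp[\Pi^0_2]{CA}{t}$, the sequence $(x_i)$ being primitive recursive in $t$ by the ``moreover'' part of \prettyref{thm:weakbw}. Feeding the functional interpretation of $(x_i)$ to $\Omega$ therefore yields a solution of the functional interpretation of \lpp[\Pi^0_2]{CA}{t}; because the reduction is primitive recursive, the terms handed to $\Omega$ remain in $T_n$ whenever $t$ does, and the primitive-recursive extraction of $g$ from the cluster-point data (as in \prettyref{thm:weakbw}) does not raise the level. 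Thus $\Omega$, applied to these $T_n$-terms, is extensionally a solution of \lpp[\Pi^0_2]{CA}{t}.

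Finally, for the lower bound I would fix, using the strict hierarchy theorem, a function $h\in T_{n+2}\setminus T_{n+1}$ and choose the matrix $t\in T_n$ so that the comprehension data for $\Forall{x}\Exists{y}\,t(x,y,m)=0$ computes $h$. Were $\Omega$ on the associated $T_n$-terms extensionally equal to a $T_{n+1}$-definable function, the composition of the previous paragraph would place $h$ in $T_{n+1}$, contradicting the choice of $h$; combined with the $T_{n+2}$ upper bound this gives the corollary. I expect the main obstacle to be this explicit choice of $t$: one must encode a genuine $T_{n+1}$-diagonalizer $h$ into a $\Pi^0_2$-matrix of complexity only $T_n$ while keeping the extracted comprehension data faithful to $h$, which is precisely the point at which the two-level gap between $T_n$ and $T_{n+2}$ must be exploited.
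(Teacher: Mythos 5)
There is a genuine gap, and it sits exactly where you locate your ``main obstacle''. Your pivotal calibration claim --- that for $t\in T_n$ a solution of the functional interpretation of \lpp[\Pi^0_2]{CA}{t} is definable in $T_{n+2}$, and that such solutions exhaust $T_{n+2}$ --- is false. A solution of that interpretation contains in particular the comprehension function $g$ itself, and for an undecidable $\Pi^0_2$ predicate (take $t$ primitive recursive such that $\Forall{x}\Exists{y}\,t(x,y,n)=0$ expresses that the $n$-th Turing machine is total) $g$ is not computable, hence not definable in \emph{any} fragment $T_k$ of G\"odel's $T$, since all $T$-definable type-one functions are computable. This traps your lower-bound strategy in a dilemma: if the instance $t$ you pick has undecidable comprehension data, then the application of $\Omega$ to your terms cannot be extensionally equal to anything in $T_{n+2}$ and the upper bound fails; if it is decidable, then showing that the resulting function avoids $T_{n+1}$ is precisely the step you defer. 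The paper escapes this by never targeting comprehension data at all: it takes $\lf{A}$ to be the \emph{true} $\Pi^0_2$ sentence that $f_{\omega_{n+1}}$ from the fast-growing hierarchy is total, whose functional interpretation has a computable and essentially unique solution, known by \cite[II.3.(d)]{HP98} (via the correspondence between $T_k$ and \lp[\Sigma^0_{\mathnormal{k}+1}]{IA}) to be definable in $T_{n+2}$ but not in $T_{n+1}$. The instance of \lp[\Pi^0_2]{CA} enters only \emph{inside} the proof, where relativizing to the comprehension function reduces the instance of \lp[\Sigma^0_{\mathnormal{n}+3}]{IA} needed for $\lf{A}$ down to \lp[\Sigma^0_{\mathnormal{n}+1}]{IA}; hence $\ls{\WIDEomega{PA}\Weak\mathnormal{[X,\langle\cdot,\cdot\rangle, (e_i)_{i\in\Nat}]}} + \lp[\Sigma^0_{\mathnormal{n}+1}]{IA}$ proves $\lpp[weak]{BW}{(x_i)_i} \IMPL \lf{A}$ for a suitable instance, and the metatheorem extracts terms in $T_n$ from this proof.

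Your upper bound has a second, related defect: it cannot be ``read off'' Kohlenbach's $\Omega^*$, because the corollary concerns an \emph{arbitrary} solution $\Omega$, and two solutions of the functional interpretation of \lp[weak]{BW} applied to the same terms need not be extensionally equal (weak cluster points are far from unique, and nothing forces $\Omega$ to resemble $\Omega^*$). What makes the $T_{n+2}$ bound hold for every $\Omega$ is again the choice of target: any solution of the functional interpretation of the totality of $f_{\omega_{n+1}}$ must output the uniquely determined witnesses, so the application of $\Omega$ to the extracted terms is extensionally equal to that one canonical $T_{n+2}$-function. Your overall skeleton --- transport a hierarchy-theoretic separation through \prettyref{thm:weakbwabs} by soundness of the functional interpretation, keeping the verifying induction low so the extracted terms stay in $T_n$ --- matches the paper's, but both the separation you invoke and the certification of the upper bound must be replaced by the fast-growing-hierarchy argument; as it stands, the decisive step is missing and the lemma meant to supply it is incorrect.
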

\begin{proof}
  Let $\lf{A}$ be the statement that the function $f_{\omega_{n+1}}$ from the fast growing hierarchy is total. It is well known that the statement $\lf{A}$ cannot be proven in \lp[\Sigma^0_{\mathnormal{n}+2}]{IA} but can be proven using a suitable instance of \lp[\Sigma^0_{\mathnormal{n}+3}]{IA}, see \cite[II.3.(d)]{HP98}. Thus a solution of the functional interpretation of $\lf{A}$ cannot be found in $T_{n+1}$ but can be found in $T_{n+2}$.

  Let $\ls{\WIDEomega{PA}\Weak\mathnormal{[X,\langle\cdot,\cdot\rangle, (e_i)_{i\in\Nat}]}}$ be the extension of $\ls{\WIDEomega{PA}\Weak\mathnormal{[X,\langle\cdot,\cdot\rangle]}}$ by the constant $(e_i)_i$, which can be majorized by $\lambda i . 1$, and the axiom $\Forall{i,j\in\Nat} \langle e_i,e_j \rangle =_\Real \delta_{ij}$.
  For this system the metatheorem \cite[Theorem~17.69.2)]{uK08}, see also \cite{GK08},
  \begin{itemize}
  \item relativized to the fragment \ls{\WIDEomega{PA}\Weak} of $\mathcal{A}^\omega$, cf.~\cite[Section~17.1, p.~382]{uK08}  and
  \item extended by the constant $(e_i)_i$ and the purely universal axiom for it, cf.~\cite[Section~17.5]{uK08}
  \end{itemize}
  holds.

  By \prettyref{thm:weakbwabs} a suitable instance of \lp[weak]{BW} can reduce an instance of \lp[\Sigma^0_{\mathnormal{n}+3}]{IA} to \lp[\Sigma^0_{\mathnormal{n}+1}]{IA}. Thus the system $\ls{\WIDEomega{PA}\Weak\mathnormal{[X,\langle\cdot,\cdot\rangle, (e_i)_{i\in\Nat}]}} + \lp[\Sigma^0_{\mathnormal{n}+1}]{IA}$ proves that a suitable instance of \lp[weak]{BW} implies $\lf{A}$.
  Applying the metatheorem to this statement yields terms in $T_{n}$ such that an application of these terms to $\Omega$ yields a solution of the functional interpretation of $\lf{A}$. 

  This prove the corollary.
\end{proof}
This shows that Kohlenbach's analysis of $\Omega^*$ (a majorant of a solution of the functional interpretation of \lp[weak]{BW}) in \cite{uKgf} is optimal.

This analysis and actually even his proof of weak compactness for abstract Hilbert spaces  \cite[Theorem~11]{uK10} shows that
only two nested instances of \lp[\Pi^0_1]{CA} (plus some uses of \lp{WKL}) are needed to proof an instance of \lp[weak]{BW}.
Thus, the lower bound on the strength of instances of \lp[weak]{BW} from the Theorems \ref{thm:weakbw} and \ref{thm:weakbwabs} is strict in the senses that there is no instance of \lp[\Pi^0_3]{CA} which is implied by an instance of \lp[weak]{BW}. 
%We will analyze the exact computational content of weak compactness for the special case  of $\ell_2$ below, see \prettyref{thm:rev}.

We now give a reversal for the special case of $\ell_2$ and analyze the exact computational content:
\begin{theorem}\label{thm:rev}
  For each bounded sequence $(x_i)_{i\in\Nat}$ in $\ell_2$ one can compute uniformly and primitive recursively an function $t$ such that
  \[
  \ls{RCA_0} \vdash \lpp[\Pi^0_2]{CA}{t} \IMPL \lpp[weak]{BW}{(x_i)_{i\in\Nat}}
  .
  \]

  In particular, each bounded and computable sequences of $\ell_2$ has a weak cluster point computable in $0''$.
\end{theorem}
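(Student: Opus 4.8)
The plan is to prove the dual of \prettyref{thm:weakbw}: rather than coding a comprehension problem into a sequence, I would extract from an arbitrary bounded sequence $(x_i)_{i\in\Nat}$ a single $\Pi^0_2$ instance whose comprehension function drives the construction of a weak cluster point. Write $a_{n,i}:=\langle e_n,x_i\rangle$; these rationals–with–modulus are primitive recursive in $(x_i)_i$, and a rational bound $C$ with $\lVert x_i\rVert\le C$ yields $\lvert a_{n,i}\rvert\le C$. Since, for bounded sequences, weak convergence is tested on the basis, a point $x$ is a weak cluster point as soon as there is a \emph{single} subsequence along which every coordinate $a_{n,i}$ converges to $\langle e_n,x\rangle$.

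First I would fix a countable supply of absolute dyadic boxes $B$ — finite tuples of rational intervals constraining the leading coordinates — and code, in a number $n$, such a box together with the predicate ``$B$ is hit infinitely often'', i.e.\ $\Forall{N}\Exists{i>N}\,(a_{0,i},\dots)\in B$. As the interval memberships are $\Sigma^0_1$ in $(x_i)_i$, this predicate is uniformly $\Pi^0_2$, so there is a primitive-recursive-in-$(x_i)_i$ term $t$ with $\lf{A}(n)\equiv\Forall{x}\Exists{y}\,txyn=0$ expressing it; this defines the required $t$. Given a comprehension function $g$ for $\lf{A}$, I would run an interval-halving diagonalisation: starting from the full box, at each stage refine all active coordinates to precision $2^{-m}$ and adjoin the next coordinate, always choosing a refining sub-box that $g$ certifies to be hit infinitely often. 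The crucial point is that every such query concerns an \emph{absolutely} specified box and is answered by the one fixed function $g$, so no jumps accumulate and the search stays primitive recursive in $g$. Infinite pigeonhole — at each stage some refinement is still infinite — is proved from $g$ by taking the maximum of the finitely many stabilisation bounds supplied by $\NOT\lf{A}$ on the exhausted boxes, which needs only the collection/induction available once $g$ is present. The box centres then define the values $\langle e_n,x\rangle$, and a diagonal choice of indices $i_k\in S_k$ gives a subsequence along which each coordinate converges, so $x$ is a weak cluster point by \prettyref{lem:pweak} and the basis criterion.

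The step I expect to be the main obstacle is producing $x$ as a genuine element of $\ell_2$, i.e.\ a norm–fast–Cauchy sequence rather than a mere coordinate table: I must exhibit a modulus $J$ with $\sum_{n>J(k)}\langle e_n,x\rangle^{2}<2^{-k}$. Naively the coordinates are only computable from the comprehension function at one jump, whence the tail sum $\sum_n\langle e_n,x\rangle^{2}$ and its modulus threaten to cost an extra jump ($0'''$ rather than $0''$), which would be fatal to the stated bound. I would defeat this by enlarging the comprehension instance with the (still uniformly $\Pi^0_2$) energy predicates ``$\sum_{n\le m}a_{n,i}^{2}>q$ for infinitely many $i$ inside the box'' and by choosing, at each stage, a refining box that captures the maximal available low-coordinate energy up to $2^{-m}$. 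Greedy mass capture forces the uncaptured energy of $x$ below a rate readable from $g$, so that $J$ is extracted from $g$ rather than computed by a further limit; this is where the quantitative work — showing that the escaping mass, detected by these energy queries, bounds the genuine tail of the weak limit at the claimed rate — will concentrate.

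Finally, the \emph{in particular} clause is immediate: when $(x_i)_i$ is computable the predicates above are $\Pi^0_2$ with no parameter, so their comprehension function $g$ is computable from $0''$; as $x$ together with its modulus $J$ is obtained from $g$ by a primitive recursive procedure, the resulting weak cluster point is computable from $0''$. The reversal $\lpp[\Pi^0_2]{CA}{t}\IMPL\lpp[weak]{BW}{(x_i)_{i\in\Nat}}$ over \ls{RCA_0} is then the formalisation of exactly this construction, using \prettyref{lem:proj} and \prettyref{lem:pweak} as in \prettyref{thm:weakbw}.
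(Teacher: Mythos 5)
Your reduction to a coordinate-wise cluster point $c\in[-1,1]^\Nat$ and your diagnosis of the crux are both exactly right: the paper's proof has the same two-stage shape, and the entire difficulty is indeed that turning the coordinate table $c$ into a genuine element of $\ell_2$ costs one more jump (a modulus for the bounded monotone sequence $\sum_{n\le k}c_n^2$), which naively lands at $0'''$. Where your proposal breaks is the proposed cure. Writing $W_m(B):=\sup\{q:\ \text{infinitely many } i\in B \text{ with } \sum_{n\le m}a_{n,i}^2>q\}$, the greedy refinement can bound the tail of its output only by the current \emph{hidden energy} $H_j:=\sup_m W_m(B_j)-W_{k_j}(B_j)$ of the box $B_j$ it occupies; but $H_j$ is a supremum over $m$ of $g$-computable values, i.e.\ exactly one more jump over $g$, and nothing forces $H_j\to 0$. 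An adversary can arrange that every box the greedy ever enters contains infinitely many ``witness'' terms carrying an extra $\delta$ of energy spread as a slow ramp over dimensions beyond anything yet queried, together with ``decoy'' terms that match the witnesses' low-coordinate energy and then plateau; at each branching the witnesses are tied with, and discarded in favour of, the decoys, so the greedy's captured increments are summable while $H_j\ge\delta$ forever. The answers $g$ gives to any finite set of box/energy queries are then consistent both with the hidden mass never materialising (tail $0$) and with it materialising on the very branch the algorithm has committed to (tail $\ge\delta$); hence no procedure recursive in $g$ can certify a modulus $J$, and once it commits to a $2^{-k}$-approximation the adversary can realise the second alternative. So the gap is not bookkeeping but the choice of instance: comprehension for the ``natural'' hitting/energy predicates is too weak to recover a weak cluster point, and the quantitative claim that greedy capture yields ``a rate readable from $g$'' is false in general.

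The paper escapes this by not asking $t$ to describe the geometry of the sequence at all. It first proves the recursion-theoretic statement that a weak cluster point is computable in the \emph{second jump} of $(x_i)_i$, and the essential tool you are missing is the low basis theorem: the coordinate-wise cluster point $c$ is obtained as a path through a $\Sigma^0_1$-tree (via the reduction to the Bolzano--Weierstra{\ss} theorem for $[-1,1]$, \cite{aK}) \emph{of degree low over $0'$}, so that the jump of $c$ --- which is precisely what the norm modulus costs, since $\lVert z_i\rVert\le\lVert z_{i+1}\rVert$ for $z_i=\langle c_0,\dots,c_i\rangle$ --- is still $\le_T 0''$ rather than $0'''$. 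Only then is $t$ chosen, using that every function recursive in the second Turing jump is recursive in a suitable instance of \lp[\Pi^0_2]{CA}; the instance encodes jump-of-jump computations (a $\Sigma^0_2$-complete predicate relative to $(x_i)_i$), not facts about boxes being hit infinitely often. To salvage your architecture you would have to either import this lowness argument (your $g$-driven search does nothing to make $c$ low over $0'$) or enlarge $t$ in exactly this non-geometric way --- at which point the greedy machinery becomes unnecessary.
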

\begin{proof}
  We show that provably in \ls{RCA_0} a cluster point of $(x_i)_i$ can be computed in the second Turing jump. The result follows then from the fact that any function computable in the second Turing jump is recursive in a suitable instance of \lp[\Pi^0_2]{CA}.

  We assume that $(x_i)_i$ is bounded by $1$. 

  Note that the Bolzano-Weierstra{\ss} theorem for the space $[-1,1]^\Nat$ (with the product metric $d\big((x_i)_i,(y_i)_i\big) = \sum_{i=0}^{\infty} \frac{\min(\lvert x_i-y_i\rvert, 1)}{2^{i+1}}$) is instance-wise equivalent to the Bolzano-Weierstra{\ss} theorem for $[-1,1]$. This can easily be seen from the fact that the Bolzano-Weierstra{\ss} theorem for $[-1,1]$ is instance-wise equivalent to the theorem for the Cantor space $2^\Nat$ and the fact that $2^\Nat$ is isomorphic to ${(2^\Nat)}^\Nat$.

  Hence by \cite{aK} one can find a cluster point of the sequence 
  \[
  y_i := \big(\langle e_0,x_i\rangle, \langle e_1,x_i \rangle, \dots \big)
  \]
  in $[-1,1]^\Nat$ by computing an infinite path trough a $\Sigma^0_1$-tree. Call this cluster point $c=(c_0,c_1,\dots)\in [-1,1]^\Nat$.
  
  \noindent
  \textit{Claim.} $\sum_{j=0}^\infty c_j \le 1$
  
  \noindent
  \textit{Proof of claim.} Since the elements of $y_i$ are elements of a Hilbert space and are norm bounded by $1$ we have that $\sum_{j=0}^k (y_i)_j^2\le 1$.
  Now for each $k$ and for each $\epsilon$ there is an $y_i$ such that $\lvert c_j - (y_i)_j\rvert \le \epsilon$ for $j\le k$ and hence 
  \[
  \sum_{j=0}^k (c_j)^2 \le \sum_{j=0}^k ((y_i)_j+\epsilon)^2 \le 1 + 3(k+1) \epsilon
  .\]
  From this follows the claim.

  Now one easily checks that the sequence $(z_i)_{i\in\Nat}$ with $z_i := \langle c_0,\dots,c_i\rangle$ converges in the $\ell_2$\nobreakdash-norm to a weak cluster point $x$ of $(x_i)_i$. This convergence is monotone in the sense that $\lVert z_i \rVert \le \lVert z_{i+1} \rVert$ thus the limit point $x$ can be computed in the Turing jump of $(z_i)_i$. 

  The point $x$ is provably computable in the second Turing jump of $(x_i)_i$ because $c$ is by the low basis theorem (\cite{JS72}) computable in a degree provably low over the first Turing jump. (The proof of the low basis theorem is effective and formalizes in \ls{RCA_0}.)
  Therefore the jump of $(z_i)_i$ and thus $x$ is computable in the second Turing jump.
\end{proof}

With this we can classify the computational strength of weak compactness on $\ell_2$:
\begin{corollary}
  For a Turing degree $d$ the following are equivalent:
  \begin{itemize}
  \item $d \ge_T 0''$ and
  \item $d$ computes a weak cluster point for each computable, bounded sequence in~$\ell_2$.
  \end{itemize}
\end{corollary}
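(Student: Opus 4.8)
The plan is to read this corollary off the two main theorems, which between them localize the degree exactly: \prettyref{thm:rev} supplies the upper bound and the corollary following \prettyref{thm:weakbw} supplies the lower bound. So the whole proof is a matter of quoting those results and checking that they fit together as a genuine degree-theoretic equivalence.

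For the implication from $d \ge_T 0''$ to the cluster-point property, I would argue as follows. Let $(x_i)_{i\in\Nat}$ be an arbitrary computable, bounded sequence in $\ell_2$. The ``in particular'' clause of \prettyref{thm:rev} guarantees that $(x_i)_i$ has a weak cluster point that is computable in $0''$. Since $d \ge_T 0''$, that cluster point is then computable in $d$. As $(x_i)_i$ was arbitrary, $d$ computes a weak cluster point for every computable, bounded sequence, which is the second bullet.

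For the reverse implication, I would specialize to the single sequence produced by the corollary immediately after \prettyref{thm:weakbw}, namely the $(x_i)_i$ obtained from \prettyref{thm:weakbw} applied to the $\Pi^0_2$ statement ``$\{n\}^{0'}(n)$ halts''; this is one fixed computable, bounded sequence. By the hypothesis, $d$ computes \emph{some} weak cluster point $x$ of it. The one thing that must be verified is that the reduction to $0''$ works from \emph{any} weak cluster point, not from a privileged one: inspecting the proof of \prettyref{thm:weakbw}, the comprehension function $g$ is extracted solely from the norms $\lVert P_{M_n}(x)\rVert$, and Claim~2 together with the weak continuity of $P_{M_n}$ (\prettyref{lem:pweak}) pins these norms down for every weak cluster point $x$ — indeed the weak limits of $(P_{M_n}(x_i))_i$ are uniquely determined, so no choice is involved. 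Hence $0''$ is computable in $x$, and therefore in $d$, giving $d \ge_T 0''$.

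The only real obstacle is exactly this uniformity check in the reverse direction: one must make sure that the encoding of $0''$ into a cluster point is not secretly tied to a distinguished cluster point, since the hypothesis only hands us an unspecified one. Once Claim~2 is seen to constrain the projections of \emph{every} weak cluster point identically, both inclusions are immediate and the stated equivalence of the two bullets follows.
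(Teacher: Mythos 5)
Your proposal is correct and matches the paper's (implicit) argument exactly: the corollary is obtained by combining the ``in particular'' clause of \prettyref{thm:rev} for the upper bound with the corollary following \prettyref{thm:weakbw} for the lower bound. Your extra check that the coding of $0''$ works from \emph{any} weak cluster point (via \prettyref{lem:pweak} and Claim~2 pinning down $\lVert P_{M_n}(x)\rVert$ for every cluster point $x$) is precisely what the paper's formulation of \prettyref{thm:weakbw} as an $\ls{RCA_0}$-provable implication already guarantees.
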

As consequence we obtain that the Bolzano-Weierstra{\ss} principle for weak compactness is instance-wise strictly stronger than the Bolzano-Weierstra{\ss} principle for the unit interval $[0,1]$, cf.~\cite{aK}.

\begin{remark}[Weihrauch lattice]\label{rem:weihrauch}
  The proofs of the Theorems~\ref{thm:weakbw} and \ref{thm:rev} can also be used to classify the Bolzano-Weierstra{\ss} principle for weak compactness in $\ell_2$ in the Weihrauch lattice.
  We do not introduce the notation for the Weihrauch lattice but refer the reader to \cite{BGM}.

  Let $\mathsf{BWT}_{\text{weak-}\ell_2} :\subseteq (\ell_2)^\Nat \rightrightarrows \ell_2$ be the partial multifunction which maps bounded sequences of $\ell_2$ to a weak cluster point of that sequence.

  The proof of Theorem~\ref{thm:weakbw} immediately yields that 
  \[
  \mathsf{BWT}_{\text{weak-}\ell_2} \ge_{\mathrm{W}} \widehat{\mathsf{LPO}}\circ \widehat{\mathrm{\mathsf{LPO}}} \equiv_{\mathrm{W}} \mathrm{lim}^{(2)}
  .\]

  Whereas the proof of \prettyref{thm:weakbw} yields that
  \[
  \mathsf{BWT}_{\text{weak-}\ell_2} \le_{\mathrm{W}} \mathsf{MCT} \ast \mathsf{BWT}_{\Real^\Nat}
  .\]
  The function $\mathsf{BWT}_{\Real^\Nat}$ is used to compute the cluster point $c\in \Real^\Nat$, the function $\mathsf{MCT}$ is used for the convergence of $(\lVert z_i\rVert)_i$.
 By the same argument as in the proof $\mathsf{BWT}_\Real \equiv_{\mathrm{W}} \mathsf{BWT}_{\Real^\Nat}$. 
 Since all of these multifunctions are cylinders one may also strengthen the reducibility to strong Weihrauch reducibility.
 Thus
  \begin{align*}
    \mathsf{BWT}_{\text{weak-}\ell_2} & \le_{\mathrm{sW}} \mathsf{MCT} \ast_{\mathrm{s}} \mathsf{BWT}_{\Real} \\
    & \le_{\mathrm{sW}} \mathrm{lim} \ast_{\mathrm{s}} \mathfrak{L'} \\
    & \le_{\mathrm{sW}} \mathrm{lim} \ast_{\mathrm{s}} \mathfrak{L}_{1,2} \\
    & \equiv_{\mathrm{sW}} \mathrm{lim} \circ \mathrm{lim}
    .
  \end{align*}
  (For the last equivalence see \cite[Corollary~8.8]{BGM}, which is a consequence of an analysis of the low basis theorem in the Weihrauch lattice, see \cite{BBP}.)

  In total we obtain that
  \[
  \mathsf{BWT}_{\text{weak-}\ell_2} \equiv_{\mathrm{sW}} \mathrm{lim}^{(2)}
  .\]
  As consequence we also obtain that $\mathsf{BWT}_{\text{weak-}\ell_2} >_{\mathrm{sW}} \mathsf{BWT}_{\Real}$.
\end{remark}

\bibliographystyle{amsplain}
\bibliography{weak}

\end{document}